\documentclass[twoside,leqno,10pt]{amsart}
\usepackage{amsfonts}
\usepackage{amsmath}
\usepackage{amscd}
\usepackage{amssymb}
\usepackage{amsthm}
\usepackage{latexsym}
\usepackage{bbm}
\setlength{\textwidth}{16.5cm}
\setlength{\oddsidemargin}{0cm}
\setlength{\evensidemargin}{0cm}
\setlength{\topmargin}{0cm}
\setlength{\headheight}{0cm}
\setlength{\headsep}{0.5cm}
\setlength{\topskip}{0cm}
\setlength{\textheight}{22.5cm}
\setlength{\footskip}{.5cm}
\numberwithin{equation}{section}

\begin{document}
\newtheorem{theorem}{Theorem}
\newtheorem{lemma}{Lemma}
\newtheorem{prop}{Proposition}
\newtheorem{corollary}{Corollary}
\newtheorem{conjecture}{Conjecture}
\numberwithin{equation}{section}
\newcommand{\dif}{\mathrm{d}}
\newcommand{\intz}{\mathbb{Z}}
\newcommand{\ratq}{\mathbb{Q}}
\newcommand{\natn}{\mathbb{N}}
\newcommand{\comc}{\mathbb{C}}
\newcommand{\rear}{\mathbb{R}}
\newcommand{\prip}{\mathbb{P}}
\newcommand{\uph}{\mathbb{H}}
\newcommand{\fie}{\mathbb{F}}

\title{A Remark on the Conjectures of Lang-Trotter and Sato-Tate on Average}
\date{\today}
\author{Stephan Baier}
\maketitle

\begin{abstract} 
We obtain new average results on the conjectures of Lang-Trotter and Sato-Tate about elliptic curves. 
\end{abstract}

\noindent {\bf Mathematics Subject Classification (2000)}: 
11G05\newline

\noindent {\bf Keywords}: elliptic curves, Lang-Trotter conjecture, Sato-Tate conjecture, character sums

\section{The conjectures of Sato-Tate and Lang-Trotter}
Before we state our results, we first explain briefly the contents of the conjectures of Lang-Trotter and Sato-Tate on elliptic curves.

Let $E$ be an elliptic curve over $\ratq$. For any prime number $p$ of good reduction, let $a_p(E)$ be the trace of the Frobenius morphism of $E/\fie_p$. Then the number of points on the reduced curve modulo $p$ equals $\#E(\fie_p)= p+1- a_p(E)$. Furthermore, by Hasse's theorem,
$|a_p(E)|\le 2\sqrt{p}$.

For the case that $E$ does not have complex multiplication, Sato and Tate \cite{Tate} formulated a conjecture on the distribution of angles associated to the numbers $a_p(E)$ which is equivalent to the following assertion on the distribution of the $a_p(E)$'s.\\ \\
{\bf Sato-Tate Conjecture:} \begin{it}
Suppose $E$ is an elliptic curve over $\ratq$ which does not admit complex multiplication. For any $-1\le \alpha< \beta\le 1$, and $x\ge 1$, let
\[ \Theta_E(\alpha,\beta;x):=\sum\limits_{\substack{p\le x\\ \alpha\le a_p(E)/(2\sqrt{p})\le\beta}}\log p. \]
Then 
\[ \lim\limits_{x\rightarrow\infty} \frac{\Theta_E(\alpha,\beta;x)}{x} =\frac{2}{\pi}\int\limits_{\alpha}^{\beta} \sqrt{1-t^2}\ {\rm d}t. \] \end{it}

In \cite{CHT}, \cite{HSBT} and \cite{Tayl}, L. Clozel, M. Harris, N. Shepherd-Barron and R. Taylor have proved the Sato-Tate conjecture for all elliptic curves $E$ over totally real fields (in particular, over $\mathbbm{Q}$) satisfying the mild condition of having multiplicative reduction at some prime. 

Lang and Trotter \cite{LTr} considered the quantity
$$
\pi_E^r(x):= \#\{p\leq x: a_p(E)=r\},
$$
where $r$ is a fixed integer.
If $r=0$ and $E$ has complex multiplication, Deuring \cite{Deu} showed that
\begin{equation} \label{deu}
\pi_E^0(x)\sim \frac{\pi(x)}{2} \ \ \ \ \mbox{ as } x\rightarrow\infty.
\end{equation}
For all other cases, Lang and Trotter \cite{LTr} conjectured that the following asymptotic
estimate holds.\\ \\
{\bf Lang-Trotter Conjecture:} \begin{it} If $E$ has no complex multiplication or $r\not=0$, we have
$$
\pi_E^r(x)\sim C_{E,r}\pi_{1/2}(x), \ \ \ \
\mbox{ as } x\to\infty,
$$
where $C_{E,r}$ is some non-negative constant depending on $E$ and $r$.\end{it}\\ \\ 
Here, as in the sequel, 
$$
\pi_{1/2}(x):=\int\limits_{2}^x \frac{{\rm d}t}{2\sqrt{t}\log t}.
$$
Lang and Trotter \cite{LTr} used a probabilistic
model to
give an explicit description of the constant $C_{E,r}$ as an Euler product. 
The constant can be
$0$, and the asymptotic estimate is then interpreted to mean that there is
only a finite number of primes such that $a_p(E)=r$. 

\section{Main results}
In the sequel, by $E(a,b)$ we denote an elliptic curve given in Weierstrass form
$$
y^2=x^3+ax+b.
$$
As in \cite{Bai} and \cite{DaP}, we define a constant $C_r$ by
$$
C_r:=\frac{2}{\pi}\prod\limits_{p|r} \left(1-\frac{1}{p^2}\right)^{-1}
\prod\limits_{p\nmid r} \frac{p(p^2-p-1)}{(p-1)(p^2-1)}.
$$
In particular, if $r=0$, we have $$C_0=\frac{2\zeta(2)}{\pi}=\frac{\pi}{3}.$$ 
We shall establish the following average estimate of Lang-Trotter-type.\\

\begin{theorem} \label{LT}
Let $\varepsilon>0$ and $C>3/2+\varepsilon$ be given. Fix an integer $r\not=0$. Then, if
\begin{equation} \label{cond1}
A,B> x^{\varepsilon} \ \ \ \ \mbox{ and }\ \ \ \ 
x^{3/2+\varepsilon}<AB<x^{C},
\end{equation}
we have, as $x\rightarrow\infty$,
\begin{equation} \label{Cor}
\frac{1}{4AB}\sum\limits_{|a|\le A} \sum\limits_{|b|\le B} \pi_{E(a,b)}^r(x)\sim
C_r\pi_{1/2}(x).
\end{equation}
If $r=0$, then, under the conditions in \eqref{cond1}, we have, as $x\rightarrow\infty$,
\begin{equation} \label{Coro}
\frac{1}{4AB}\sum\limits_{1\le |a|\le A}\ \sum\limits_{1\le |b|\le B} \pi_{E(a,b)}^0(x)\sim
\frac{\pi}{3}\pi_{1/2}(x).
\end{equation}
\end{theorem}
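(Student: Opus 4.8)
The plan is to adapt the method of Fouvry--Murty and David--Pappalardi: interchange the summations, extract a main term from a pointwise count of elliptic curves over $\fie_p$ with prescribed trace, and bound the remaining character sums --- the one new point being that the ``axis'' frequencies must be estimated beyond the pointwise Weil bound in order to reach the wide range \eqref{cond1}. Write $e_p(x)=e^{2\pi ix/p}$ and $e(x)=e^{2\pi ix}$. Interchanging the order of summation,
\[
\sum_{|a|\le A}\sum_{|b|\le B}\pi_{E(a,b)}^r(x)=\sum_{p\le x}\ \sum_{\substack{|a|\le A,\ |b|\le B\\ p\,\nmid\,4a^3+27b^2}}\mathbbm{1}\!\left[a_p(E(a,b))=r\right].
\]
By Hasse's bound only primes $p>r^2/4$ contribute, and for every $p$ above a fixed bound the condition $a_p(E(a,b))=r$ depends only on $(a,b)\bmod p$; for those $p$ the inner sum counts the lattice points of $[-A,A]\times[-B,B]$ that lie in the set $S_p(r)\subseteq\fie_p^2$ of residues $(a_0,b_0)$ for which $E(a_0,b_0)$ is nonsingular over $\fie_p$ and has trace $r$. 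With $N_p(r):=\#S_p(r)$, Deuring's theorem (equivalently the Eichler--Selberg trace formula) gives $N_p(r)=\tfrac{p-1}{2}H(4p-r^2)+O(p)$ for $p$ large with $p\nmid r$, where $H$ is the Hurwitz--Kronecker class number; the finitely many primes $p\mid r$ and the $p$ below the bound go into the error. The main term comes from replacing each inner sum by its expectation $\tfrac{(2A+1)(2B+1)}{p^2}N_p(r)\sim\tfrac{4AB}{p^2}N_p(r)$: inserting the analytic class number formula for $H(4p-r^2)$ in terms of $L(1,\chi_{4p-r^2})$ and carrying out the sieve and prime number theorem average of $L(1,\cdot)$ over $p\le x$ exactly as in \cite{DaP},\cite{Bai}, the local factors assemble into $C_r$ and, using $\sum_{p\le x}p^{-1/2}\sim 2\pi_{1/2}(x)$, one obtains $\sum_{p\le x}\tfrac{(2A+1)(2B+1)}{p^2}N_p(r)\sim 4AB\,C_r\,\pi_{1/2}(x)$, which after division by $4AB$ is the asserted main term (the $O(p)$ in $N_p(r)$ only costs $O(AB\log\log x)$).

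The discrepancy between the inner sums and their expectations is the heart of the argument. For each $p$ I complete the box sum: with $U(\theta)=\sum_{|a|\le A}e(a\theta)$, $V(\theta)=\sum_{|b|\le B}e(b\theta)$ and $\widehat{S_p}(h,k):=\sum_{(a_0,b_0)\in S_p(r)}e_p(ha_0+kb_0)$,
\[
\sum_{|a|\le A,\ |b|\le B}\mathbbm{1}\!\left[(a,b)\bmod p\in S_p(r)\right]=\frac{1}{p^2}\sum_{h,k\bmod p}\widehat{S_p}(h,k)\,U\!\left(\frac hp\right)V\!\left(\frac kp\right),
\]
the term $(h,k)=(0,0)$ being the expectation just treated. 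For the generic frequencies, $h\not\equiv 0$ and $k\not\equiv 0$, I would use the Weil/Deligne estimate $\widehat{S_p}(h,k)\ll p^{1+o(1)}$ --- a saving of $p^{1/2}$ over the trivial bound $N_p(r)\asymp p^{3/2}$ --- obtained either by detecting $a_p=r$ through $a_p(E(a,b))=-\sum_{u\bmod p}\big(\tfrac{u^3+au+b}{p}\big)$ and the Riemann hypothesis for curves, or by averaging over the twisting torus $E(a,b)\cong E(u^4a,u^6b)$ and applying Weil's bound to the resulting one-variable sum. Since $\sum_{h\not\equiv 0}|U(h/p)|\ll p^{1+o(1)}$ and likewise for $V$, these frequencies contribute $\ll\sum_{p\le x}p^{-2}\cdot p^{1+o(1)}\cdot p^{1+o(1)}\cdot p^{1+o(1)}=x^{2+o(1)}$, which is $o\big(AB\,\pi_{1/2}(x)\big)$ once $AB>x^{3/2+\varepsilon}$: this is the source of the threshold in \eqref{cond1}.

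The remaining axis frequencies ($h\not\equiv 0$, $k\equiv 0$ and the transpose) are the main obstacle, since the pointwise bound gives them only $\ll(A+B)x^{1+o(1)}$, forcing $A,B\gg x^{1/2+\varepsilon}$. Here I would use the twisting action once more. For fixed $a_0\ne 0$ the column count $G_p(a_0):=\#\{b_0:(a_0,b_0)\in S_p(r)\}$ is invariant under $a_0\mapsto u^4a_0$, hence depends only on the class of $a_0$ in $\fie_p^{\ast}/(\fie_p^{\ast})^4$, a group of order $\gcd(4,p-1)\le 4$; the crucial claim --- which should follow from Deuring's theorem applied to the quartic--twist families $y^2=x^3+g_jx+b$ --- is that these at most four values all equal $\tfrac12 H(4p-r^2)+O(1)$. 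Granting it, sorting $a_0$ by quartic--residue class and opening each class with multiplicative characters turns
\[
\widehat{S_p}(h,0)=G_p(0)+\sum_{a_0\in\fie_p^{\ast}}e_p(ha_0)\,G_p(a_0)
\]
into $G_p(0)$, plus $\tfrac12 H(4p-r^2)\sum_{a_0\ne 0}e_p(ha_0)=-\tfrac12 H(4p-r^2)$, plus a bounded combination of Gauss sums, so that $\widehat{S_p}(h,0)\ll p^{1/2+o(1)}$ for $h\not\equiv 0$; the analogue for $\widehat{S_p}(0,k)$ follows the same way using the $(\fie_p^{\ast})^6$-invariance of the row counts. With this saving the axis frequencies contribute only $\ll(A+B)\,x^{1/2+o(1)}$, which is $o\big(AB\,\pi_{1/2}(x)\big)$ as soon as $A,B>x^{\varepsilon}$. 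I expect the identity $G_p(a_0)=\tfrac12 H(4p-r^2)+O(1)$ --- equivalently, a genuine power saving over Weil for the axis Fourier coefficients --- to be the deepest ingredient of the proof.

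It remains to dispose of the lower-order terms. Pairs $(a,b)$ with $p\mid 4a^3+27b^2$ lie in $O(p)$ residue classes and are absorbed by the same estimates; and when $r\ne 0$ the CM curves inside the box number $\ll A+B$ and each has $a_p=r$ for only $O(\sqrt x)$ primes, so they contribute $\ll(A+B)\sqrt x=o\big(AB\,\pi_{1/2}(x)\big)$; this proves \eqref{Cor}. When $r=0$ the curves with $a=0$ (complex multiplication by $\intz[\zeta_3]$, so $a_p=0$ for all $p\equiv 2\bmod 3$) and with $b=0$ (complex multiplication by $\intz[i]$, so $a_p=0$ for all $p\equiv 3\bmod 4$) contribute $\asymp B\,x/\log x$ and $\asymp A\,x/\log x$ respectively, which would swamp the main term when $A$ or $B$ is small; excising them is exactly the role of the conditions $1\le|a|$, $1\le|b|$ in \eqref{Coro}, and with them deleted the argument applies verbatim with the value $C_0=\pi/3$, yielding \eqref{Coro}.
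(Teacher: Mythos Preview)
Your decomposition is sound through the main term and the generic frequencies, but there is a genuine gap at the axis frequencies. The ``crucial claim'' $G_p(a_0)=\tfrac12 H(4p-r^2)+O(1)$ is equivalent to the assertion that the $\asymp p^{1/2}$ isomorphism classes of curves over $\fie_p$ with trace $r$ are equidistributed, with error $O(1)$, among the four cosets of $(\fie_p^\ast)^4$ according to their $a$-coordinate. This does \emph{not} follow from Deuring: quartic twists exist only for $j=1728$, while for any class $[E(c,d)]$ with $cd\ne 0$ \emph{all} of its Weierstrass models have $a$-coordinate in the single coset $c(\fie_p^\ast)^4$, so Deuring's class count says nothing about how the cosets are populated. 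Concretely, your claim amounts to $\sum_{j=1}^{I_{r,p}}(u_{p,j}/p)_4^{\ell}=O(1)$ for each $\ell\in\{1,2,3\}$, a pointwise cancellation over $\asymp p^{1/2}$ terms that is not known and is heuristically false; even square-root cancellation there would only give $\widehat{S_p}(h,0)\ll p^{3/4+o(1)}$ and hence force $A,B>x^{1/4+\varepsilon}$ rather than $x^{\varepsilon}$.

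The paper reaches the full range \eqref{cond1} by a different mechanism. In multiplicative-character language (to which your additive expansion reduces after the Gauss-sum computation you sketch), the axis contribution at $p$ is a bounded combination of terms of the shape $p^{-1}\bigl(\sum_j\psi(u_{p,j})\bigr)\, B\,\sum_{|a|\le A}\overline{\psi}(a)$ for a handful of fixed nonprincipal characters $\psi$ of order dividing $4$ (and the symmetric terms with $A,B$ swapped). The paper makes \emph{no} attempt to find cancellation in $\sum_j\psi(u_{p,j})$ --- this is bounded trivially by $I_{r,p}\ll p^{1/2}\log^2 p$ via Lemma~\ref{Ilemma} --- and instead controls the short character sum $\sum_{|a|\le A}\psi(a)$ through the \emph{almost-all} result of Garaev and Banks--Shparlinski (Lemma~\ref{gara}): for $M\ge x^{\varepsilon}$ and all primes $p\le x$ with at most $x^{3/4+4\eta+o(1)}$ exceptions, every nonprincipal $\chi\bmod p$ satisfies $\bigl|\sum_{|n|\le M}\chi(n)\bigr|\le M^{1-\eta}$. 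Taking $\eta=1/20$ and bounding the exceptional primes trivially yields \eqref{E11}, which is $o(AB\,\pi_{1/2}(x))$ as soon as $A,B>x^{\varepsilon}$. It is this statistical input on the factor $\sum_{|a|\le A}\psi(a)$, not any pointwise equidistribution of the $u_{p,j}$, that drives the relaxation from $A,B>x^{1/2+\varepsilon}$ to $A,B>x^{\varepsilon}$.
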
\medskip

In \eqref{Coro}, we have excluded the elliptic curves in the families $E(a,0)$ and $E(0,b)$ with $a,b\not=0$ because it turns out that 
$$
\frac{1}{4AB} \left(\sum\limits_{1\le |a|\le A} \pi_{E(a,0)}^0(x)+ 
\sum\limits_{1\le |b|\le B} \pi_{E(0,b)}^0(x)\right) \gg \left(\frac{1}{A}+\frac{1}{B}\right) \pi(x)
$$
which is much larger than $\frac{\pi}{3}\pi_{1/2}(x)$ if $A,B$ are small compared to $\sqrt{x}$. 
This is due to the fact that the curves in the said families have complex multiplication in which case we have Deuring's result \eqref{deu}. 

All other curves with complex multiplication are of the form $E_{(\alpha_it^2,\beta_it^3)}$, where $t\in \mathbbm{Z}/\{0\}$, and $(\alpha_i,\beta_i)$ is in an explicit set of eleven pairs of integers (see \cite{FMu}, page 3, for example). Hence, if $AB> x^{3/2+\varepsilon}$, their contribution to \eqref{Coro} is 
$$
\ll \frac{\min\{A^{1/2},B^{1/3}\}}{AB} \pi(x) \ll \frac{x}{\sqrt{AB}} \ll x^{1/4}
$$
which is negligible compared to the main term.

In \cite{Bai}, we proved Theorem \ref{LT} under the conditions
$$A,B>x^{1/2+\varepsilon}\ \ \ \ \mbox{ and }\ \ \ \ AB>x^{3/2+\varepsilon}$$  
in place of \eqref{cond1}. Hence, unlike the corresponding Theorem 2 in \cite{Bai}, the above Theorem \ref{LT} applies to situations when $A$ and $B$ are very small compared to $x^{1/2}$. Our additional condition $AB<x^{C}$ in Theorem \ref{LT} is not a real constraint since we are mainly interested in averages for small $A$'s and $B$'s, and it is likely that this condition can be removed by a refined treatment of a certain error term in section 3. 

We further note that the above-mentioned Theorem 2 in \cite{Bai} in turn was a generalization of an average result by E. Fouvry and M.R. Murty \cite{FMu} on $\pi_{E(a,b)}^0(x)$ and an improvement of a result of C. David and F. Pappalardi \cite{DaP} who showed the asymptotic formula \eqref{Cor} under the stronger condition $A,B>x^{1+\varepsilon}$.

Moreover, we shall prove the following average result on the Sato-Tate conjecture.

\begin{theorem} \label{ST}
Let $\varepsilon,c>0$ and $C>3/2+2\varepsilon$ be given. 
Further, let $x\ge 1$ and $0< \alpha< \beta\le 1$. Set 
\begin{equation}\label{Fdef}
F(\alpha,\beta):=\frac{2}{\pi}
\int\limits_{\alpha}^{\beta} \sqrt{1-t^2}\ {\rm d}t \ \ \ \ \mbox{ and } \ \ \ \
\gamma:=\beta-\alpha. 
\end{equation}
Assume that 
$x^{\varepsilon-5/12}\le \gamma/\beta \le x^{-\varepsilon}$ and $F(\alpha,
\beta)\ge x^{-1/2+\varepsilon}$.
Then, if 
\begin{equation} \label{cond3}
A,B>x^{\varepsilon}\ \ \ \ \mbox{ and }\ \ \ \  x^{1+\varepsilon}/F(\alpha,\beta)<AB<x^{C},
\end{equation}
we have
\begin{equation} \label{stest}
 \frac{1}{4AB}\sum\limits_{1\le |a|\le A} \sum\limits_{1\le |b|\le B} \Theta_{E(a,b)}(\alpha,\beta;x) =xF(\alpha,\beta) \left(1+O\left(\frac{1}{\log^c x}\right)\right),
\end{equation}
where the implied $O$-constant depends only on $\varepsilon$, $c$ and $C$.
\end{theorem}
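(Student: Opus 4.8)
The strategy is to detect the condition $a_p(E(a,b)) = r$ by summing over residue classes and then to translate the whole problem into one about character sums, exactly as in the companion Lang–Trotter estimate (Theorem~\ref{LT}). First I would recall the elementary counting formula: for a prime $p$ of good reduction and an integer $t$ with $|t|\le 2\sqrt{p}$, the number of pairs $(a,b)\bmod p$ for which $a_p(E(a,b)) = t$ is given, up to negligible corrections for the degenerate curves, by a Hurwitz–Kronecker class number $H(4p-t^2)$; averaging this over $t$ in the Sato–Tate window $\alpha \le t/(2\sqrt p)\le\beta$ and invoking the Eichler–Selberg type identity $\sum_{t} H(4p-t^2) \sim$ (a simple factor)$\cdot p$ produces the archimedean density $\tfrac{2}{\pi}\sqrt{1-u^2}\,\dif u$. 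The contribution of the main diagonal $(a,b)\bmod p$ thus yields $\sum_{p\le x}(\log p)\,p\cdot F(\alpha,\beta)/p^2 \cdot(\text{correction}) = x F(\alpha,\beta)(1+o(1))$ after partial summation, and the discarded curves $a=0$ or $b=0$ or with complex multiplication contribute $\ll x^{1/2}$ by the bounds already recorded in the discussion after Theorem~\ref{LT}, hence are absorbed into the error term since $F(\alpha,\beta)\ge x^{-1/2+\varepsilon}$.

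The bulk of the work is estimating the off-diagonal terms. Writing the indicator of $\alpha\le a_p/(2\sqrt p)\le\beta$ via additive characters modulo $p$ and opening the sums over $|a|\le A$, $|b|\le B$, one is left with a main term (the $c\equiv 0$ frequency) that gives precisely $xF(\alpha,\beta)$, and an error term which is a weighted sum over primes $p\le x$ and nonzero frequencies of incomplete exponential sums in $a$ and $b$ twisted by the point-count of $E(a,b)$ over $\fie_p$. The key input is a nontrivial bound for these twisted character sums — this is where one uses Weil's bound for the relevant curves and, crucially, a large-sieve / mean-value estimate to gain in the average over $p$, so that the condition $AB > x^{1+\varepsilon}/F(\alpha,\beta)$ suffices rather than the larger range $AB>x^{3/2+\varepsilon}$. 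Concretely I expect an estimate of the shape
\[
\sum_{p\le x}(\log p)\Bigl|\sum_{|a|\le A}\sum_{|b|\le B} \psi_p\bigl(a_p(E(a,b))\bigr)\Bigr| \ll x^{o(1)}\bigl(AB\,x^{1/2} + A x + B x + x^{2}\bigr)^{1/2}(\cdots)^{1/2},
\]
and dividing by $4AB$ and comparing against $xF(\alpha,\beta)$ forces the stated lower bounds on $A,B$ and $AB$; the hypotheses $x^{\varepsilon-5/12}\le\gamma/\beta\le x^{-\varepsilon}$ are needed so that the smoothing of the sharp-cutoff indicator function over the window $[\alpha,\beta]$ introduces an error of acceptable size (a standard Fourier truncation costs a factor of order $(\gamma\sqrt p)^{-1}$ in the length of the character sum expansion, which must be balanced).

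The main obstacle, and the reason the argument is delicate, is squeezing the range of validity down to $AB>x^{1+\varepsilon}/F(\alpha,\beta)$: a naive application of Weil plus the trivial count of primes only gives $AB>x^{3/2+\varepsilon}$, so one must exploit cancellation in the sum over $p$ of the exponential sums. I would do this by Cauchy–Schwarz in $p$ followed by opening the square and executing the sum over $a$ and $b$ first, reducing matters to counting solutions of a congruence condition relating two primes — essentially a large sieve inequality for the family $\{E(a,b)\}$, of the type used by David–Pappalardi and refined in \cite{Bai}. The condition $AB<x^C$ enters precisely here, in controlling an error term in this mean-value estimate, exactly as noted after Theorem~\ref{LT}; I expect it can be removed with more care but it is harmless for the intended applications. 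Once the twisted-character-sum average is in hand, the passage from \eqref{stest} in raw form to the stated error term $O(1/\log^c x)$ is routine partial summation against $\pi_{1/2}$ and $\Theta$, using the prime number theorem to handle the weight $\log p$.
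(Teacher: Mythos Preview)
Your plan misidentifies both the character decomposition and, more importantly, which condition is actually new in Theorem~\ref{ST}.

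First, the paper does not detect the condition $a_p(E(a,b))=r$ via additive characters. Following \cite{Bai} and \cite{BaZh}, one instead uses that elliptic curves over $\fie_p$ with a fixed trace fall into isomorphism classes whose number is $H(r^2-4p)$, and within a class the twists are parametrized multiplicatively; this leads to the multiplicative character sum \eqref{char2} involving the biquadratic symbol $(\cdot/p)_4$ and Dirichlet characters $\chi$ mod $p$. The decomposition $\mathcal{X}=\mathcal{M}+\mathcal{E}_1+\mathcal{E}_2$ is according to whether $(\cdot/p)_4^k\chi^3$ and $\chi^2$ are principal, not according to additive frequencies.

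Second, and this is the real gap: the condition $AB>x^{1+\varepsilon}/F(\alpha,\beta)$ that you describe as ``the main obstacle'' was already obtained in \cite{BaZh}; it comes from the treatment of $\mathcal{E}_2$ by Cauchy--Schwarz and the large sieve, and that part is taken over unchanged here. What is new in Theorem~\ref{ST} is the relaxation of the individual conditions $A,B>x^{1/2+\varepsilon}$ (from \cite{BaZh}) down to $A,B>x^{\varepsilon}$. These arose in \cite{BaZh} from bounding $\mathcal{E}_1$ by Polya--Vinogradov, which is useless for character sums of length below $\sqrt{p}$. The entire point of the present proof is to replace Polya--Vinogradov by the almost-all estimate of Garaev / Banks--Shparlinski (Lemma~\ref{gara}): for all but $x^{3/4+4\eta+o(1)}$ primes $p\le x$ and all nonprincipal $\chi$ mod $p$, one has $\left|\sum_{|n|\le M}\chi(n)\right|\le M^{1-\eta}$ as soon as $M\ge x^{\varepsilon}$. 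Applying this with $\eta=1/20$ to the sums over $a$ and $b$ in $E_1(p)$, and bounding the exceptional primes trivially, yields \eqref{charbound} and hence \eqref{aim}. Your outline never invokes such a short-character-sum input, and a Cauchy--Schwarz-in-$p$ argument of the type you sketch will not by itself produce nontrivial cancellation when $A$ or $B$ is as small as $x^{\varepsilon}$.

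The upper bound $AB<x^C$ is used only to control the trivial piece analogous to \eqref{new} (pairs with $p\mid ab$), not in the large-sieve step as you suggest.
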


To avoid technical complications, we have excluded the cases when $ab=0$. This makes sense because, as mentioned above, all elliptic curves $E(a,0)$ and $E(0,b)$ ($a,b\not=0$) have complex multiplication, and the Sato-Tate conjecture is exclusively formulated for curves {\it without} complex multiplication (if $E$ has complex multiplication, the distribution of the $a_p(E)$'s is different from the Sato-Tate distribution). 

We recall that the number of all remaining curves with complex multiplication is $O(\min\{A^{1/2},B^{1/3}\})$. Hence, if $AB> x^{1+\varepsilon}/F(\alpha,\beta)$, their contribution to \eqref{stest} is, by a trivial estimation,
$$
\ll \frac{\min\{A^{1/2},B^{1/3}\}}{AB}x \ll \frac{x}{\sqrt{AB}} \ll x^{1/2}
$$
which is majorized by the error term $xF(\alpha,\beta)/\log^c x$ since we assume that $F(\alpha,\beta)\ge x^{-1/2+\varepsilon}$.
 
In \cite{BaZh}, L. Zhao and I proved Theorem \ref{ST} (with the cases when $ab=0$ included)
under the conditions 
$$
A,B>x^{1/2+\varepsilon}\ \ \ \ \mbox{ and }\ \ \ \  AB>x^{1+\varepsilon}/F(\alpha,\beta)
$$
in place of \eqref{cond3}.
Again, Theorem \ref{ST} in the present paper allows much more flexibility in the choice of $A$ and $B$, and the condition $AB<x^{C}$ therein is not a real constraint.

From Theorem \ref{ST}, we derive the following corollary on
the Sato-Tate conjecture on average for {\it fixed} $\alpha$ and $\beta$.

\begin{corollary} \label{STcor}
Let $\varepsilon,c>0$ and $C>1+\varepsilon$ be given, and let $\alpha$, $\beta$ be fixed real numbers with $0< \alpha< \beta< 1$. Define $F(\alpha,\beta)$ as in Theorem \ref{ST}.
Then, if 
\begin{equation} \label{cond2}
A,B> x^{\varepsilon}\ \ \ \  \mbox{ and }\ \ \ \  x^{1+\varepsilon}<AB<x^{C},
\end{equation}
we have, as $x\rightarrow \infty$,
\begin{equation} \label{asy}
\frac{1}{4AB}\sum\limits_{1\le |a|\le A} \sum\limits_{1\le |b|\le B} \Theta_{E(a,b)}(\alpha,\beta;x) =xF(\alpha,\beta) \left(1+O\left(\frac{1}{\log^c x}\right)\right), 
\end{equation}
where the implied $O$-constant depends only on $\alpha$, $\beta$, $\varepsilon$, $c$ and $C$.
\end{corollary}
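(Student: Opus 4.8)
The plan is to deduce Corollary \ref{STcor} from Theorem \ref{ST} by partitioning the fixed interval $[\alpha,\beta]$ into many short subintervals, applying Theorem \ref{ST} to each of them, and summing. This detour seems unavoidable: for fixed $\alpha$ and $\beta$ the ratio $\gamma/\beta=(\beta-\alpha)/\beta$ is a positive constant, so the hypothesis $\gamma/\beta\le x^{-\varepsilon}$ of Theorem \ref{ST} — which confines that theorem to short intervals — cannot hold as $x\to\infty$, and a long interval must first be broken up into admissible short ones.

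I would first reduce to the case $0<\varepsilon<1/2$, which is harmless since the hypotheses of the corollary only become weaker when $\varepsilon$ is replaced by a smaller positive number. Then I would set $\varepsilon':=\varepsilon/3$, $C':=C+1$ and $k=k(x):=\lfloor x^{\varepsilon/2}\rfloor$, and observe that the set
$$
\mathcal{B}(x):=\left\{\frac{m}{2\sqrt{p}}\ :\ p\le x\ \text{prime},\ m\in\intz,\ |m|\le 2\sqrt{p}\right\}
$$
is finite. Since $[\alpha,\beta]$ has positive length, one can choose $\alpha=\alpha_0<\alpha_1<\dots<\alpha_k=\beta$ with $\alpha_j\notin\mathcal{B}(x)$ for $1\le j\le k-1$ and $\alpha_j-\alpha_{j-1}\asymp\gamma/k$ for every $j$. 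Because $a_p(E(a,b))/(2\sqrt{p})$ always lies in $\mathcal{B}(x)$, no prime $p\le x$ can make this quantity equal to an interior division point $\alpha_j$; hence the sum defining $\Theta$ splits exactly,
$$
\Theta_{E(a,b)}(\alpha,\beta;x)=\sum_{j=1}^{k}\Theta_{E(a,b)}(\alpha_{j-1},\alpha_j;x),
$$
while additivity of the integral gives $\sum_{j=1}^{k}F(\alpha_{j-1},\alpha_j)=F(\alpha,\beta)$.

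The next step is to apply Theorem \ref{ST}, with $\varepsilon'$ and $C'$ playing the roles of $\varepsilon$ and $C$, to each interval $[\alpha_{j-1},\alpha_j]$. Since $0<\alpha\le\alpha_{j-1}<\alpha_j\le\beta<1$, the integrand $\sqrt{1-t^2}$ stays between two positive constants depending only on $\alpha,\beta$, so $F(\alpha_{j-1},\alpha_j)\asymp\alpha_j-\alpha_{j-1}\asymp x^{-\varepsilon/2}$ and $(\alpha_j-\alpha_{j-1})/\alpha_j\asymp x^{-\varepsilon/2}$, with implied constants depending only on $\alpha,\beta$. For $x$ large (in terms of $\alpha,\beta,\varepsilon$) one then verifies, using $0<\varepsilon<1/2$, that $x^{\varepsilon'-5/12}\le(\alpha_j-\alpha_{j-1})/\alpha_j\le x^{-\varepsilon'}$, that $F(\alpha_{j-1},\alpha_j)\ge x^{-1/2+\varepsilon'}$, that $A,B>x^{\varepsilon}>x^{\varepsilon'}$, that
$$
x^{1+\varepsilon'}/F(\alpha_{j-1},\alpha_j)\ll k\,x^{1+\varepsilon'}\ll x^{1+5\varepsilon/6}<x^{1+\varepsilon}<AB<x^{C}\le x^{C'},
$$
and that $C'=C+1>3/2+2\varepsilon'$; thus condition \eqref{cond3} (with $\varepsilon',C'$) holds for every $j$. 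Theorem \ref{ST} then gives, uniformly in $j$,
$$
\frac{1}{4AB}\sum_{1\le|a|\le A}\sum_{1\le|b|\le B}\Theta_{E(a,b)}(\alpha_{j-1},\alpha_j;x)=xF(\alpha_{j-1},\alpha_j)\left(1+O\left(\frac{1}{\log^{c}x}\right)\right),
$$
with an $O$-constant depending only on $\varepsilon,c,C$. Summing over $1\le j\le k$ and using the two displayed identities above yields \eqref{asy}, the resulting $O$-constant depending only on $\alpha,\beta,\varepsilon,c,C$.

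I expect the only delicate point to be the choice of the two auxiliary parameters. The number of pieces $k$ must be a large enough power of $x$ that each piece is short in the sense $(\alpha_j-\alpha_{j-1})/\alpha_j\le x^{-\varepsilon'}$ demanded by Theorem \ref{ST}, but small enough that the resulting constraint $AB>x^{1+\varepsilon'}/F(\alpha_{j-1},\alpha_j)\asymp k\,x^{1+\varepsilon'}$ still follows from the hypothesis $AB>x^{1+\varepsilon}$ of the corollary; the choices $\varepsilon'=\varepsilon/3$ and $k\asymp x^{\varepsilon/2}$ leave a comfortable margin for both. Picking the interior division points outside the finite set $\mathcal{B}(x)$ is only a cosmetic device that makes the pieces reassemble exactly, avoiding any double counting at the seams.
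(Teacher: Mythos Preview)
Your proposal is correct and follows essentially the same route as the paper: split $[\alpha,\beta]$ into subintervals of length $\asymp x^{-\varepsilon/2}$, apply Theorem~\ref{ST} with $\varepsilon$ replaced by $\varepsilon/3$ to each piece, and sum. You supply considerably more detail than the paper's sketch --- the explicit choice of $k$, the adjusted upper bound $C'$, and the device of choosing interior division points outside $\mathcal{B}(x)$ to avoid double counting --- but the underlying argument is the same.
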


\begin{proof} 
If $0< \alpha< \beta < 1$ and $x$ is sufficiently large, then it is possible to split
the interval $[\alpha,\beta]$ into a finite number of subintervals $[\alpha',\beta']$ satisfying $x^{-2\varepsilon/3}\le (\beta'-\alpha')/\beta'=\gamma'/\beta' \le x^{-\varepsilon/3}$ and $F(\alpha',
\beta')\ge x^{-2\varepsilon/3}$. Now applying Theorem \ref{ST} with $\varepsilon$ replaced by
$\varepsilon/3$ to each of these subintervals, and summing up all contributions, we obtain the desired asymptotic estimate \eqref{asy} under the conditions in \eqref{cond2}. 
\end{proof}

We note that Theorem 14 in the recent work \cite{WDBIES} of W. Banks and I.E. Shparlinski implies the asymptotic estimate \eqref{asy} as well (the contributions of $a,b$ with $ab=0$ is negligible), but they require the  
conditions 
$$x^{\varepsilon}<A,B\le x^{1-\varepsilon} \ \ \ \ \mbox{ and } \ \ \ \  AB>x^{1+\varepsilon}\sqrt{\min\{A,B\}}$$
which are stronger than our conditions in \eqref{cond2}.

On the other hand, their error term estimate is {\it uniform} with respect to $\alpha$ and $\beta$, unlike that in our Corollary \ref{STcor}, and their estimate is sharper than ours by a factor of $x^{-\delta}$. Moreover, their result is valid for all $\alpha$, $\beta$ with $-1\le \alpha<\beta\le 1$ (in fact, they consider angles corresponding to $\alpha$ and $\beta$, which lie in the interval $[0,\pi]$). 
Our method certainly works for $\alpha,\beta$ with $-1<\alpha<\beta<0$ as well, but so far it doesn't cover intervals $[\alpha,\beta]$ containing $-1$, $0$ or $1$.

We note that the work of L. Clozel, M. Harris, N. Shepherd-Barron and R. Taylor in \cite{CHT}, \cite{HSBT} and \cite{Tayl} on the Sato-Tate conjecture for individual elliptic curves does not imply any of the above average results due to the lack of uniformity of the error term with respect to $a$ and $b$, and due to the lack of sufficiently strong zero density estimates for symmetric power $L$-functions. (Such zero density estimates would be required to establish a version of the Sato-Tate conjecture on individual elliptic curves for {\it small} intervals $[\alpha,\beta]$.)
 
We achieve our improvements by employing an almost-all result on character sums by Banks and Shparlinski which played an important rule in their paper \cite{WDBIES} too and is a consequence of a more general result by Garaev \cite{gar}. This result turns out to be more useful in the estimation of certain error terms than the bound of Polya-Vinogradov, which we used in \cite{Bai} and \cite{BaZh} at corresponding places, since it applies to very short character sums.

\section{Proof of Theorem 1}
In the following, let $r\not= 0$. We first estimate the contribution of all elliptic curves in the families $E(a,0)$ and $E(0,b)$ ($a,b\not=0$). We again note that these curves have complex multiplication. Further, if $E$ is an elliptic curve with complex multiplication, then, with an absolute $\ll$-constant not depending on $E$ or $r$, we have the bound
$$
\pi_E^r(x)\ll x^{1/2}.
$$
This is due to the fact that if $E$ has complex multiplication and $r\not=0$, then the primes $p$ satisfying $a_p(E)=r$ are of the form $p=f_{E,r}(n)/4$, where $n$ is an integer and $f_{E,r}$ is a certain quadratic polynomial with integer coefficients (see the equation and inequality before Theorem 9 in \cite{Co}). It follows that if $r\not=0$, then
\begin{equation} \label{0}
\frac{1}{4AB}\left(\sum\limits_{1\le |a|\le A} \pi_{E(a,0)}^r(x) + \sum\limits_{1\le |b|\le B} \pi_{E(0,b)}^r(x)\right)\ll \left(\frac{1}{A}+\frac{1}{B}\right)x^{1/2} \ll x^{1/2-\varepsilon}.
\end{equation}

It remains to estimate the sum 
$$
\sum\limits_{1\le |a|\le A} \sum\limits_{1\le |b|\le B} \pi_{E(a,b)}^r(x),
$$
where we now admit all integers $r$ (including $r=0$).
Here we follow our method in \cite{Bai}, with the alteration that we use a result due to Banks, Shparlinski and Garaev instead of the Polya-Vinogradov estimate to bound a certain error term. We shall be brief at all places where we don't alter the method in \cite{Bai}.

Similarly as in equation (2.2) in \cite{Bai}, the quantity in question can be written in the form
\begin{equation} \label{rew}
\sum\limits_{1\le |a|\le A} \sum\limits_{1\le |b|\le B} \pi_{E(a,b)}^r(x)=
\sum\limits_{B(r)<p\le x} \sharp\{1\le |a|\le A,\ 1\le |b|\le B\ :\ a_p(E(a,b))=r\},
\end{equation}
where $B(r)=\max\{3,r,r^2/4\}$. In \cite{Bai}, we first estimated the contribution of $a$'s and $b$'s with $p|ab$
by
\begin{equation} \label{negl}
\ll \frac{AB}{p}+A+B,
\end{equation}
which turned out to be a small enough error term. We then evaluated the remaining term
$$
\sharp\{|a|\le A,\ |b|\le B\ :\ p\nmid ab,\ a_p(E(a,b))=r\}.
$$
In the present note, the bound \eqref{negl} is not sufficient due to the fact that we admit $A$'s and $B$'s that are much smaller than in \cite{Bai}. In the following, we establish a refined estimate for the contribution in question.
We observe that 
\begin{eqnarray} \label{new}
& & \sum\limits_{B(r)<p\le x} \sharp\{1\le |a|\le A, 1\le |b|\le B\ :\ p|ab,\ a_p(E(a,b))=r\}\\
&\le &  4\ \sum\limits_{1\le a\le A}\ \sum\limits_{1\le b\le B}\ \sum\limits_{p|ab} \ 1\nonumber\\
&\le & 4\ \sum\limits_{n\le AB} \tau(n)^2 \nonumber\\
&\ll & (AB)^{1+\varepsilon_0}\nonumber
\end{eqnarray}
for every fixed $\varepsilon_0>0$,
where $\tau(n)$ is the number of divisors of $n$. By \eqref{new} and our condition $AB<x^C$ in Theorem \ref{ST}, the above contribution is indeed negligible if $C<1/(2\varepsilon_0)$.

The remaining term is
$$
\sum\limits_{B(r)<p\le x} \sharp\{|a|\le A,\ |b|\le B\ :\ p\nmid ab,\ a_p(E(a,b))=r\},
$$
which we shall evaluate in the following.
By Lemma 1 in \cite{Bai} (see also Lemma 1 in \cite{BaZh}) due to Deuring, the total number of $\fie_p$-isomorphism classes of elliptic curves
$E(c,d)$ over $\fie_p$ with $p+1-r$ points equals the Kronecker class number $H(r^2-4p)$. 
Let $I_{r,p}$ be the number of $\fie_p$-isomorphism classes of elliptic curves
$E(c,d)$ over $\fie_p$ with $p+1-r$ points
such that $c,d\not=0$. Hence,
\begin{equation} \label{triv}
I_{r,p}\le H(r^2-4p). 
\end{equation} 
Let $(u_{p,j},v_{p,j})$, $j=1,...,I_{r,p}$ be pairs of
integers such that the curves $E(\overline{u_{p,j}},\overline{v_{p,j}})$
form a
system of representatives of these isomorphism classes, where $\overline{n}$ denotes the reduction of an integer $n$ modulo $p$. 
Let $(\cdot/p)_4$ 
be the biquadratic residue symbol. Then, as observed in section 4 in \cite{Bai}, if $p\equiv 1$ mod $4$, we have 
\begin{eqnarray} \label{char}
& & \sharp\{|a|\le A,\ |b|\le B\ :\ p\nmid ab,\ a_p(E(a,b))=r\}\\  &=&
\frac{1}{4\varphi(p)} \sum\limits_{k=1}^4
\sum\limits_{\chi\ \! \mbox{\scriptsize mod } p} \sum\limits_{j=1}^{I_{r,p}}
\left(\frac{u_{p,j}}{p}\right)_4^{-k} \overline{\chi}^3(u_{p,j})
\chi^2(v_{p,j})
\sum\limits_{|a|\le A}\left(\frac{a}{p}\right)_4^{k}\chi^3(a)
\sum\limits_{|b|\le B}  \overline{\chi}^2(b)\nonumber
\\ 
&=& M(p)+E_1(p)+E_2(p),\nonumber
\end{eqnarray}
where\\ 

$M(p)=$ contribution of $k,\chi$ with
$(\cdot/p)_4^{k} \chi^3=\chi_0$,
$\chi^2=\chi_0$;\medskip

$E_1(p)=$ contribution of $k,\chi$ with
$(\cdot/p)_4^{k} \chi^3\not=\chi_0$,
$\chi^2=\chi_0$ or $(\cdot/p)_4^{k} \chi^3=\chi_0$,
$\chi^2\not=\chi_0$;\medskip

$E_2(p)=$ contribution of $k,\chi$ with
$(\cdot/p)_4^{k} \chi^3\not=\chi_0$, $\chi^2\not=\chi_0$.\\ \\
As noted in \cite{Bai}, in the case $p\equiv 3$ mod $4$, a similar representation of the term $$
\sharp\{|a|\le A,\ |b|\le B\ :\ p\nmid ab,\ a_p(E(a,b))=r\}
$$
as a character sum is possible, and this expression can be treated in a similar way as the above expression in the case $p\equiv 1$ mod $4$. Therefore, as in \cite{Bai}, we can confine ourselves to primes $p$ with $p\equiv 1$ mod $4$.
 
In \cite{Bai} we used results in \cite{DaP} to treat the the main term $M(p)$. The error term $E_1(p)$ was estimated by using the Polya-Vinogradov inequality, and the error term $E_2(p)$ was handled by the Cauchy-Schwarz inequality and some mean value estimates for character sums. Our estimate for $E_1(p)$ gave rise to the condition $A,B\ge x^{1/2+\varepsilon}$ in
Theorem 2 in \cite{Bai}, and our estimate for $E_2(p)$ gave rise to the condition $AB\ge x^{3/2+\varepsilon}$ in the same theorem. 

In the following, we want to refine the estimation of 
$$
\sum\limits_{\substack{B(r)<p\le x\\ p\equiv 1\ \mbox{\scriptsize \rm mod}\ 4}} |E_1(p)|
$$
by using the following variant of an almost-all result on character sums of Banks and Shparlinski \cite {WDBIES} which is contained in a more general result, Theorem 10 in \cite{gar}, of Garaev.

\begin{lemma} \label{gara} Fix $\varepsilon>0$ and $\eta>0$. If $x>0$ is sufficiently large,
then for all $M\ge x^{\varepsilon}$, all primes with at most $x^{3/4+4\eta+o(1)}$ exceptions,
and all non-principal multiplicative characters $\chi$ modulo $p$, we have
$$
\left|\sum\limits_{|n|\le M} \chi(n)\right| \le M^{1-\eta},
$$
where the function implied by $o(1)$ depends only on $\varepsilon$ and $\eta$.  
\end{lemma}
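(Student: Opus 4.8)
Lemma~\ref{gara} is a special case of Theorem~10 of Garaev~\cite{gar} (and of the almost-all result of Banks and Shparlinski in~\cite{WDBIES}), so the plan is to deduce it from that theorem after two routine reductions, and to point out which ingredient does the real work. First I would pass to one-sided sums: since $\chi(-n)=\chi(-1)\chi(n)$,
\[
\sum_{|n|\le M}\chi(n)=\bigl(1+\chi(-1)\bigr)\sum_{1\le n\le M}\chi(n),
\]
so $\bigl|\sum_{|n|\le M}\chi(n)\bigr|\le 2\bigl|\sum_{n\le M}\chi(n)\bigr|$, and it is enough to bound the one-sided sums outside an exceptional set of primes --- exactly the shape of Garaev's conclusion --- the bounded factor $2$ and the precise exponent being absorbed by applying his theorem with a slightly adjusted value of $\eta$ (which costs only $x^{o(1)}$ in the size of the exceptional set).

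Next I would remove the small moduli by Burgess's inequality: for every fixed integer $r\ge1$ and every non-principal $\chi$ modulo $p$,
\[
\Bigl|\sum_{n\le M}\chi(n)\Bigr|\ll_r M^{1-1/r}\,p^{(r+1)/(4r^{2})}\log p .
\]
Choosing $r=r(\eta)$ large enough, the right-hand side is $\le M^{1-\eta}$ whenever $M\ge p^{1/4+\kappa(\eta)}$, where $\kappa(\eta)\to0$ as $\eta\to0$; equivalently, \emph{every} prime $p\le M^{4-\theta(\eta)}$ with $\theta(\eta)\to0$ is non-exceptional, for all non-principal $\chi$. Since $M\ge x^{\varepsilon}$, this disposes of everything when $M$ is not too small (e.g.\ $M\ge x^{1/3}$ leaves no exceptional primes $p\le x$ at all), so one may assume $M<x^{1/3}$ and count only exceptional primes in the window $M^{4-\theta(\eta)}<p\le x$.

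For that window I would invoke the quantitative core of Garaev's theorem. A prime $p$ can be exceptional only if the interval $[1,M]$ has unusually rich multiplicative structure modulo $p$, and the number of such $p\le x$ is controlled by estimating a high power-moment $\sum_{p}\sum_{\chi\ne\chi_{0}}\bigl|\sum_{n\le M}\chi(n)\bigr|^{2k}$, which upon expansion becomes a count of the multiplicative coincidences $n_{1}\cdots n_{k}\equiv m_{1}\cdots m_{k}\pmod p$, and then feeding in sharp bounds for the multiplicative energy of an interval in $\fie_{p}$ (equivalently, for the associated multiplication table modulo $p$); calibrating $k$ against $\varepsilon$ and $\eta$ and using $M<x^{1/3}$ produces the bound $x^{3/4+4\eta+o(1)}$, with the implied $o(1)$ depending only on $\varepsilon$ and $\eta$. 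The obstacle is exactly this last step. A second-moment (large-sieve) treatment gives only about $x^{2}M^{-1+2\eta}$ exceptional primes, useless for short $M$, and a naive higher-moment computation stalls near $x^{1+o(1)}$: when $p$ is much larger than $M$, the \emph{diagonal} term $n_{i}=m_{i}$ of \emph{all} non-principal characters dominates the contribution of a single exceptional $\chi$, so that bad character becomes invisible to the moment. Getting the genuine power saving down to $x^{3/4+\cdots}$ requires the combinatorial input on multiplicative energies that is the heart of \cite{gar}, and that is the part I would cite rather than reprove.
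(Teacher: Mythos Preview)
Your proposal is correct and follows essentially the same route as the paper: reduce the two-sided sum to the one-sided sum via the identity $\sum_{|n|\le M}\chi(n)=(1+\chi(-1))\sum_{n=1}^{M}\chi(n)$, and then cite the one-sided almost-all result (Lemma~3 in \cite{WDBIES}, itself a special case of Theorem~10 in \cite{gar}). The paper's proof consists of exactly these two lines and nothing more; your additional discussion of Burgess and multiplicative energies is a sketch of the internal workings of Garaev's theorem, which the paper simply cites without comment. You are in fact more careful than the paper about the harmless factor~$2$, noting that it is absorbed by taking a slightly larger~$\eta$ in the cited result.
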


\begin{proof} This is Lemma 3 in \cite{WDBIES}, except that there the above sum is replaced by 
$$
\sum\limits_{n=1}^M \chi(n).
$$ 
But
$$
\sum\limits_{|n|\le M} \chi(n) = (1+\chi(-1))\sum\limits_{n=1}^M \chi(n)
$$
and hence, the result follows.
\end{proof}

We also need the following bound for $I_{r,p}$.

\begin{lemma} \label{Ilemma} If $|r|< 2\sqrt{p}$, then
$$I_{r,p}\le H(r^2-4p) \ll p^{1/2}\log^2 p.$$
\end{lemma}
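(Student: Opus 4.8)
The statement to prove is Lemma~\ref{Ilemma}: if $|r| < 2\sqrt{p}$, then $I_{r,p} \le H(r^2-4p) \ll p^{1/2}\log^2 p$.

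\medskip

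The plan is as follows. The inequality $I_{r,p} \le H(r^2-4p)$ has already been recorded in \eqref{triv}, so the entire content of the lemma is the upper bound $H(r^2-4p) \ll p^{1/2}\log^2 p$. First I would recall the definition of the Kronecker class number: writing $r^2 - 4p = Df^2$ with $D$ the discriminant of an imaginary quadratic order (here $D < 0$ since $|r| < 2\sqrt{p}$ forces $r^2 - 4p < 0$), one has $H(r^2-4p) = \sum_{g \mid f} h(Dg^2)$, where $h$ denotes the ordinary class number of the order of discriminant $Dg^2$, or equivalently $H(r^2-4p)$ is the Hurwitz--Kronecker class number counting $\mathrm{SL}_2(\intz)$-classes of positive definite binary quadratic forms of discriminant $r^2-4p$ (weighted at the forms equivalent to multiples of $x^2+y^2$ and $x^2+xy+y^2$), which up to a bounded factor is $\sum_{g \mid f} h(Dg^2)$.

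\medskip

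Next I would invoke the classical bounds on class numbers of orders. For a fundamental discriminant $D<0$ one has the Dirichlet class number formula $h(D) = \frac{w_D \sqrt{|D|}}{2\pi} L(1, \chi_D)$, and the trivial estimate $L(1,\chi_D) \ll \log |D|$ (from partial summation on the character sum, using $\sum_{n \le N}\chi_D(n) \ll |D|^{1/2}$, or more simply $L(1,\chi_D) \ll \log|D|$ by splitting the Dirichlet series at $|D|$), giving $h(D) \ll |D|^{1/2}\log|D|$. For a non-maximal order of conductor $g$ one has the standard formula $h(Dg^2) = \frac{h(D)\, g}{[\mathcal{O}_D^\times : \mathcal{O}_{Dg^2}^\times]} \prod_{\ell \mid g}\left(1 - \left(\tfrac{D}{\ell}\right)\tfrac{1}{\ell}\right) \ll h(D)\, g$. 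Summing over $g \mid f$ gives $H(r^2-4p) \ll h(D) \sum_{g \mid f} g \ll h(D)\, f\, \tau(f) \ll h(D)\, f \log f \ll |D|^{1/2} f (\log |D|)(\log f)$. Since $|D| f^2 = 4p - r^2 \le 4p$, we have $|D|^{1/2} f \le (|D| f^2)^{1/2} = (4p-r^2)^{1/2} \le 2\sqrt{p}$, and both $\log|D|$ and $\log f$ are $\ll \log p$, so $H(r^2-4p) \ll p^{1/2}\log^2 p$, as claimed.

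\medskip

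The main obstacle — really the only point requiring a little care rather than pure bookkeeping — is the combination of the two logarithmic losses: one from the upper bound $L(1,\chi_D) \ll \log|D|$ for the maximal order, and one from the divisor-sum $\sum_{g\mid f} g \ll f\tau(f)$ over the conductors of the non-maximal orders. One must check that these genuinely stack to $\log^2 p$ rather than something worse; since $\tau(f) \ll f^{o(1)}$ one could in fact absorb that factor and even get $\log^{1+o(1)} p$, but $\log^2 p$ is the clean bound stated and suffices. Everything else is a direct citation of Dirichlet's class number formula, the conductor formula relating $h(Dg^2)$ to $h(D)$, and the elementary bound on $L(1,\chi_D)$; no deep input (e.g.\ no subconvexity, no GRH) is needed.
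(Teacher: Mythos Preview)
Your approach is essentially the same as the paper's: the paper cites a formula (Lemma~3 in \cite{BaZh}) expressing $H(r^2-4p)=\pi^{-1}\sum_{r^2-4p=df^2,\ d\equiv 0,1\bmod 4}\sqrt{|d|}\,L(1,\chi_d)$, then applies $L(1,\chi_d)\ll\log|d|\ll\log p$ and bounds $\sum_f \sqrt{|d|}=\sqrt{4p-r^2}\sum_f 1/f\ll\sqrt{p}\log p$, which is your class-number-formula-plus-conductor-sum argument in slightly different packaging. One small slip to fix: the step $f\,\tau(f)\ll f\log f$ asserts $\tau(f)\ll\log f$, which is false in general; instead bound $\sum_{g\mid f}g=\sigma(f)=f\sum_{g\mid f}g^{-1}\le f\sum_{g\le f}g^{-1}\ll f\log f$ directly, and your chain goes through as written.
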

\begin{proof} The inequality $I_{r,p}\le H(r^2-4p)$ was stated in \eqref{triv}. By Lemma 3 in \cite{BaZh}, for the Kronecker class number $H(r^2-4p)$ we have the formula
$$
H(r^2-4p)=\frac{1}{\pi}\sum\limits_{\substack{f,d\\ r^2-4p=df^2\\ d\equiv 0,1\
\bmod{4}}} \sqrt{|d|}L(1,\chi_d),
$$
where $\chi_d$ is a certain real character with conductor $\ll d$ (the above formula follows from a relation between the Kronecker and Dirichlet class numbers, and the Dirichlet class number formula). Now using the well-known bound
$$
L(1,\chi_d)\ll \log d,
$$
the desired result follows by a quick computation.
\end{proof}

As noted in \cite{Bai}, for each $k$ the number of characters $\chi$ modulo $p$ satisfying $(\cdot/p)_4^{k} \chi^3\not=\chi_0$,
$\chi^2=\chi_0$ or $(\cdot/p)_4^{k} \chi^3=\chi_0$,
$\chi^2\not=\chi_0$ is bounded. Therefore, Lemma \ref{Ilemma} implies the bound
\begin{equation}\label{fir}
|E_1(p)|\ll \frac{\log^2 p}{p^{1/2}} \left(A
\max\limits_{\substack{\chi\ \! \mbox{\scriptsize mod } p\\ \chi\not=\chi_0}} 
\left|\sum\limits_{|b|\le B}  \chi(b)\right|+B
\max\limits_{\substack{\chi\ \! \mbox{\scriptsize mod } p\\ \chi\not=\chi_0}} 
\left|
\sum\limits_{|a|\le A}\chi(a)\right|\right).
\end{equation}
From \eqref{fir} and Lemma \ref{gara} with $\eta=1/20$, we now obtain 
\begin{equation} \label{E11}
\sum\limits_{\substack{B(r)<p\le x\\ p\equiv 1\ \mbox{\scriptsize \rm mod}\ 4}} |E_1(p)|
\ll x^{1/2}(\log x)^2 AB^{19/20}+x^{1/2}(\log x)^2A^{19/20}B+x^{9/20+\varepsilon}AB.
\end{equation}
Similarly, one can prove that
\begin{equation} \label{E12}
\sum\limits_{\substack{B(r)<p\le x\\ p\equiv 3\ \mbox{\scriptsize \rm mod}\ 4}} |E_1(p)|
\ll x^{1/2}(\log x)^2 AB^{19/20}+x^{1/2}(\log x)^2A^{19/20}B+x^{9/20+\varepsilon}AB.
\end{equation}

Moreover, from the first equation after (4.3) in \cite{Bai}, Lemma 3 in \cite{Bai}, and Lemma \ref{Ilemma} above, we deduce that 
\begin{equation} \label{M}
\sum\limits_{B(r)<p\le x} M(p)
= 4C_r\pi_{1/2}(x)AB+ O\left(\frac{AB\sqrt{x}}{\log^c x}\right)
\end{equation}
for any given $c>0$,
and from the first inequality after (4.4) in \cite{Bai} and Lemma \ref{Ilemma} above, we deduce that
\begin{equation} \label{E2}
\sum\limits_{B(r)<p\le x} |E_2(p)|
\ll x^{5/4}(\log x)^4(AB)^{1/2}.
\end{equation}
Now, combining  \eqref{rew}, \eqref{new}, \eqref{char}, \eqref{E11}, \eqref{E12}, \eqref{M} and \eqref{E2},
we obtain the estimate
\begin{eqnarray} \label{LTfin}
& &\frac{1}{4AB}\sum\limits_{1\le |a|\le A} \sum\limits_{1\le |b|\le B}\pi_{E(a,b)}^r(x)\\
&=&
C_r\pi_{1/2}(x)+O\left((AB)^{\varepsilon_0-1}+x^{1/2}(\log x)^2\left(\frac{1}{A^{1/20}}+\frac{1}{B^{1/20}}+\frac{1}{x^{1/20-\varepsilon}}\right) +\frac{x^{5/4}\log^4 x}{\sqrt{AB}}+
\frac{\sqrt{x}}{\log^c x}\right).\nonumber
\end{eqnarray}
From \eqref{0} and \eqref{LTfin}, we deduce that
the desired asymptotic estimates \eqref{Cor} and \eqref{Coro} hold 
under the conditions in \eqref{cond1}. This completes the proof of Theorem 1.

\section{Proof of Theorem 2} 
We follow our method in \cite{BaZh}, with the alteration that we again use Lemma \ref{gara} instead of the Polya-Vinogradov estimate to bound a certain error term. Since we proceed similarly as in the previous section, we shall be very brief. Similarly as in \cite{BaZh} and in the previous section, we first write the quantity
$$ 
\sum\limits_{1\le |a|\le A} \sum\limits_{1\le |b|\le B} \Theta_{E(a,b)}(\alpha,\beta;x)
$$
in question as a character sum $\mathcal{X}$ plus some error term which can be bounded in a similar way as in \eqref{new} and is negligible under the condition $AB<x^{C}$ with $C>3/2+2\varepsilon$ being arbitrarily given. We then split our character sum $\mathcal{X}$ into a main term of the form 
$$\mathcal{M}=\sum\limits_{p\le x} M(p)$$
and two error terms $\mathcal{E}_1$, $\mathcal{E}_2$ of the form  
$$\mathcal{E}_i=\sum\limits_{p\le x} E_i(p).$$ We don't change our treatments of the main term and the second error term in \cite{BaZh} at all. To bound these terms, we required the conditions $F(\alpha,\beta)\ge x^{-1/2+\varepsilon}$, $x^{\varepsilon-5/12}\le \gamma/\beta \le x^{-\varepsilon}$ and $AB>x^{1+\varepsilon}/F(\alpha,\beta)$ in \cite{BaZh}. The treatment of the first error term in \cite{BaZh} led to the additional condition $A,B>x^{1/2+\varepsilon}$ which we aim to replace by $A,B>x^{\varepsilon}$. To this end, we need to estimate this error term $\mathcal{E}_1$ by a different technique. 

For the proof of Theorem \ref{ST} it now suffices to establish that
\begin{equation} \label{aim}
\frac{1}{4AB} |\mathcal{E}_1| \ll  F(\alpha,\beta)\frac{x}{\log^c x}
\end{equation}
holds for every fixed $c>0$ if $A,B>x^{\varepsilon}$.
By the considerations in \cite{BaZh}, if $p\equiv 1$ mod $4$, then $E_1(p)$ is of the form
\begin{equation} \label{char2}
E_1(p)=
\frac{1}{4\varphi(p)} \sum\limits_{k=1}^4\ \sideset{}{'}\sum\limits_{\chi\ \! \bmod{p}}\ \sum\limits_{j=1}^{I_{p}} \left(\frac{u_{p,j}}{p}\right)_4^{-k} \overline{\chi}^3(u_{p,j}) \chi^2(v_{p,j}) \sum\limits_{|a|\le A}\left(\frac{a}{p}\right)_4^{k}\chi^3(a) \sum\limits_{|b|\le B}  \overline{\chi}^2(b),
\end{equation}
where the sum $\sideset{}{'}\sum\limits_{\chi\ \! \bmod{p}}$ is taken over all characters such that
$(\cdot/p)_4^{k} \chi^3\not=\chi_0$, $\chi^2=\chi_0$ or $(\cdot/p)_4^{k} \chi^3=\chi_0$,
$\chi^2\not=\chi_0$, the number $I_p$ satisfies the bound
\begin{equation} \label{Ip}
I_p\le \sum\limits_{2\sqrt{p}\alpha \le r\le 2\sqrt{p}\beta} H(r^2-4p) =:H_p,
\end{equation}
and $u_{p,j},v_{p,j}$ are certain integers.
By (5.4) in \cite{BaZh} and our condition $F(\alpha,\beta)\ge x^{-1/2+\varepsilon}$, we have the bound
\begin{equation} \label{Hp}
H_p\ll x^{1+\varepsilon_1}F(\alpha,\beta)
\end{equation}
for any fixed $\varepsilon_1>0$, the implied $\ll$-constant depending only on $\varepsilon_1$. Using \eqref{char2}, \eqref{Ip}, \eqref{Hp} and the fact that the number of summands of the sum $\sideset{}{'}\sum\limits_{\chi\ \! \bmod{p}}$ is bounded, we deduce that
\begin{equation} \label{char22}
|E_1(p)|\ll
\frac{x^{1+\varepsilon_1}F(\alpha,\beta)}{p} \left(A\max\limits_{\substack{\chi\ \! \bmod{p}\\ \chi\not=\chi_0}} \left| \sum\limits_{|b|\le B} 
\chi(b) \right| +  
B\max\limits_{\substack{\chi\ \! \bmod{p}\\ \chi\not=\chi_0}} 
\left| \sum\limits_{|a|\le A}\chi(a) \right|
\right).
\end{equation}
Now using \eqref{char22} and Lemma \ref{gara} with $\eta=1/20$, we obtain
\begin{equation} \label{charbound}
\sum\limits_{\substack{p\le x\\ p\equiv 1\ \mbox{\scriptsize \rm mod}\ 4}} |E_1(p)| \ll
x^{1+2\varepsilon_1}F(\alpha,\beta)AB^{19/20}+
x^{1+2\varepsilon_1}F(\alpha,\beta)A^{19/20}B+
x^{19/20+\varepsilon}F(\alpha,\beta)AB.
\end{equation}

If $p\equiv 3$ mod $4$, then the term $E_1(p)$ can be written as a character sum similar to \eqref{char2} and be estimated by the same method. This leads to the same bound for
$ 
\sum\limits_{\substack{p\le x\\ p\equiv 3\ \mbox{\scriptsize \rm mod}\ 4}} |E_1(p)|
$
as \eqref{charbound}. Therefore, we obtain 
$$ 
|\mathcal{E}_1| \le \sum\limits_{p\le x} |E_1(p)| \ll
x^{1+2\varepsilon_1}F(\alpha,\beta)AB^{19/20}+
x^{1+2\varepsilon_1}F(\alpha,\beta)A^{19/20}B+
x^{19/20+\varepsilon}F(\alpha,\beta)AB
$$
which is
$$
\ll  \frac{xF(\alpha,\beta)}{\log^c x}
$$
for every fixed $c>0$ if $A,B\ge x^{\varepsilon}$, as desired. This completes the proof of Theorem 2.\\ \\
{\bf Acknowledgment.} The author wishes to thank W. Banks and I.E. Shparlinski for bringing their paper \cite{WDBIES} to my attention. He would further like to thank N.C. Jones and L. Zhao for useful discussions.

Stephan Baier\newline
School of Engineering and Science, Jacobs University Bremen \newline
P. O. Box 750561, Bremen 28725, Germany \newline
Email: {\tt s.baier@jacobs-university.de} \newline

\begin{thebibliography}{99}
\bibitem[1]{Bai} S. Baier, {\it The Lang-Trotter conjecture on average},
to appear in J. Ramanujan Math. Soc., arXiv:math.NT/0609095.
\bibitem[2]{BaZh} S. Baier, L. Zhao, {\it The Sato-Tate Conjecture on Average for Small Angles}, to appear in Trans. Am. Math. Soc., arXiv:math.NT/0608318.
\bibitem[3]{WDBIES} W. D. Banks, I. E. Shparlinski, {\it Sato-Tate,
	   cyclicity, and divisibility statistics on average for
	   elliptic curves of small height}, preprint, ArXiv:math.NT/0609144.
\bibitem[4]{CHT} L. Clozel, M. Harris, and R. Taylor, {\it Automorphy for some l-adic lifts of automorphic mod l Galois representations}, preprint, available at www.math.harvard.edu/$\sim$rtaylor.
\bibitem[5]{Co} A.C. Cojocaru, {\it Questions about the reductions modulo primes of an elliptic curve}, Proceedings of the 7-th conference of the Canadian Number Theory Association (Montreal, 2002), ed. E. Goren and H. Kisilevsky, CRM Proceedings and Lecture Notes, Vol. 36 (2004) 61-79. 
\bibitem[6]{DaP} C. David, F. Pappalardi, {\it Average Frobenius
Distributions of Elliptic Curves},
Int. Math. Res. Not. (1999) 165-183.
\bibitem[7]{Deu} M. Deuring, {\it Die Typen der Multiplikatorenringe
elliptischer Funktionenk\"orper}, Abh. Math. Sem. Hansischen Univ. 14 (1941)
197-272.
\bibitem[8]{FMu} E. Fouvry, M.R. Murty, {\it
On the distribution of supersingular
primes}, Canad. J. Math. 48 (1996) 81-104.
\bibitem[9]{gar} M.Z. Garaev, {\it Character sums in short intervals and the multiplication table modulo a large prime}, Monat. Math. 148 (2006) 127-138.
\bibitem[10]{HSBT} M.Harris, N. Shepherd-Barron and R. Taylor, {\it Ihara's lemma and potential automorphy}, preprint, available at www.math.harvard.edu/$\sim$rtaylor.
\bibitem[11]{HIEK} H. Iwaniec, E. Kowalski, {\it
Analytic Number Theory},
American Mathematical Society Colloquium Publications,
American Mathematical Society, vol. 53.
\bibitem[12]{JaYu} K. James, G. Yu, {\it Average Frobenius Distribution
of Elliptic Curves},  Acta Arith.  124  (2006), 79--100.
\bibitem[13]{Jon} N. Jones, {\it The constants in the Lang-Trotter
conjecture}, preprint (2006).
\bibitem[14]{LTr} S. Lang, H. Trotter, {\it Frobenius Distributions in GL$_2$
extensions}, Lecture Notes in Math. 504 (1976) Springer-Verlag, Berlin.
\bibitem[15]{Tate} J.T. Tate, {\it
Algebraic cycles and poles of zeta functions},
Arithmetical algebraic Geom., Harper and Row, New York, 1965.
\bibitem[16]{Tayl} R. Taylor, {\it Automorphy for some $l$-adic lifts of automorphic mod $l$ representations II}, preprint, available at www.math.harvard.edu/$\sim$rtaylor.
\end{thebibliography}
\end{document}